\newtheorem{thm}{Theorem}[section]
\newtheorem{lem}[thm]{Lemma}
\newtheorem{prop}[thm]{Proposition}
\newtheorem{remark}[thm]{Remark}
\newcommand{\R}{\mathbb{R}}
\newcommand{\Z}{\mathcal{Z}}
\begin{document}
\title[Zero Number Diminishing Properties]{The Zero Number Diminishing Property under General Boundary Conditions}
\thanks{$^*$ This research was partly supported by NSFC (No. 11671262)}

\author[B. Lou]{Bendong Lou$^{*}$}
\thanks{$*$ Mathematics \& Science College, Shanghai Normal University, Shanghai 200234, China.}
\thanks{{\bf Email:} {\sf lou@shnu.edu.cn}.}
\date{}

\begin{abstract}
The so-called {\it zero number diminishing property} (or {\it zero number argument}) is a powerful tool in qualitative studies of one dimensional
parabolic equations, which says that, under the zero- or non-zero-Dirichlet boundary conditions, the number of zeroes of the solution
$u(x,t)$ of a linear equation is finite, non-increasing and strictly decreasing when there are multiple zeroes (cf. \cite{Ang}).
In this paper we extend the result to the problems with more general boundary conditions: $u= 0$ sometime and $u\not= 0$ at
other times on the domain boundaries. Such results can be applied in particular to parabolic equations
with Robin and free boundary conditions.
\end{abstract}

\subjclass[2010]{35B05, 35K55, 35K10}
\keywords{Parabolic equation, qualitative property, zero number diminishing property}
\maketitle

%%%%%%%%%%%%%%%%%%%%%%%%%%%%%%%%%%%%%%%%%%%%%%%%%%%%%%%%%%%%%%%%%%%%%%%%%%%%%%%%%%%%%%%%%
%%%%%%%%%%%%%%%%%%%%%%%%%%%%%%%%%%%%%%%%%%%%%%%%%%%%%%%%%%%%%%%%%%%%%%%%%%%%%%%%%%%%%%%%%

\section{Introduction}
Consider the following one dimensional linear parabolic equation:
\begin{equation}\label{linear}
u_t=a(x,t) u_{xx}+b(x,t) u_x+c(x,t) u\quad \mbox{ in } \Omega := \{(x,t) \mid \xi_1 (t) <x < \xi_2 (t),\ t\in (0,T) \},
\end{equation}
where $T>0$ is a constant, $\xi_1$ and $\xi_2$ are continuous functions to be specified below.
For each $t\in (0,T)$, denote by
$$
\Z(t) := \#\{x\in I(t)\mid u(x,t)=0\}
$$
the number of zeroes of $u(\cdot,t)$ in the interval $I(t) := [\xi_1(t), \xi_2(t)]$.
Sometimes we also write $\Z(t)$ as $\Z_{I(t)} [u(\cdot, t)]$ to emphasize the interval $I(t)$.
A point $x_0\in I(t)$ is called a {\it multiple zero} (or {\it degenerate zero}) of
$u(\cdot, t)$ if $u(x_0,t) = u_x (x_0,t)=0$.
In 1988, Angenent \cite{Ang} proved the following properties about the zero number of $u$:

\vskip 3mm

\noindent
{\bf Theorem A} (\cite{Ang}). {\it Assume $\xi_i(t)\equiv \xi^0_i\ (i=1,2)$ are constants with $\xi^0_1 <\xi^0_2$, and
\begin{equation}\label{smoothy}
a, a^{-1}, a_t, a_x, a_{xx}, b, b_t, b_x, c \in L^\infty.
\end{equation}
If $u$ is a nontrivial classical solution of \eqref{linear} satisfying the following boundary conditions:
\begin{equation}\label{bdry}
\left\{
\begin{array}{l}
u (\xi^0_1,t )\equiv 0 \mbox{\ or\ }  u(\xi^0_1,t )\not= 0 \ (\forall\, t\in (0,T)),\\
u (\xi^0_2,t )\equiv 0 \mbox{\  or\ }  u(\xi^0_2,t )\not= 0 \ (\forall\, t\in (0,T)),
\end{array}
\right.
\end{equation}
or
\begin{equation}\label{bdry-Neumann}
u_x (\xi^0_1,t )\equiv u_x (\xi^0_2,t )\equiv 0,
%\quad \mbox{while } a\equiv 1,\ b\equiv 0,
\end{equation}
then
\begin{itemize}
\item[(a)] $\Z(t) <\infty$ for each $t \in (0,T)$;
\item[(b)] $\Z (t)$ is non-increasing in $t\in (0, T)$;
\item[(c)] if, $x_0 \in I$ is a multiple zero of $u(\cdot,s)$ for some $s\in (0, T)$, then $\Z (t_1) > \Z (t_2)$ for
all $t_1, t_2$ satisfying $0 < t_1< s< t_2 <T$.
\end{itemize}
}
\vskip 3mm
\noindent
Roughly speaking, this theorem says that $\Z (t)$ is finite and non-increasing in time $t$. Hence the zero number is also
called by some authors as {\it discrete Lyapunov functional}, which indicates that the solution
becomes more and more simple. We refer the references in \cite{Ang},
in particular, Henry \cite{Henry}, Matano \cite{Ma1982}, etc. for more related results. In the original result in \cite{Ang}, the Neumann case \eqref{bdry-Neumann} was actually treated under the additional conditions $a\equiv 1,\ b\equiv 0$, which are mainly required for the regularity and can be omitted as in \cite{ChXY} by considering solutions with lower regularity.

The boundary conditions in \eqref{bdry} are zero- or non-zero-Dirichlet ones, they may not be true
in the problems with Robin or free boundary conditions, since in such cases the following may occur:
\begin{equation}\label{general-bdry}
u(\xi_i(t),t) =0 \mbox{ for some } t,\qquad   u(\xi_i(t),t) \not= 0 \mbox{ for other }t,\quad i=1,2.
\end{equation}
The main purpose of this paper is to extend Theorem A to such cases.  One special example
is the problem with Robin boundary conditions, for which we will
prove the following result.

\begin{thm}\label{thm:Robin}
Under the assumptions of Theorem A, if the boundary conditions \eqref{bdry} and \eqref{bdry-Neumann} are replaced by (linear and/or nonlinear) Robin ones:
\begin{equation}\label{bdry-Robin}
u_x (\xi^0_1 , t) - g_1 (t,u(\xi^0_1 ,t)) =0,\quad
u_x(\xi^0_2, t) + g_2 (t,  u(\xi^0_2, t)) =0,\quad   t\in (0,T),
\end{equation}
for some functions $g_i\ (i=1,2)$ satisfying $g_i (t,0)\equiv 0$, $p g_i(t,p) \geq 0$ for $p\not= 0$, then the conclusions in Theorem A remain hold.
\end{thm}

This theorem will be proved in the next section as a consequence of our main  result (Theorem \ref{thm:main} below). On the other hand, this result itself is also general since it includes some typical boundary conditions as special examples. 
For example, the linear Robin boundary conditions
$$
u_x (\xi^0_1 , t) - \beta_1(t) u(\xi^0_1 ,t) =0,\quad
u_x(\xi^0_2, t) + \beta_2(t)  u(\xi^0_2, t) =0,\quad   t\in (0,T),
$$
for some $\beta_1(t),\ \beta_2(t)\geq 0$, or the homogeneous Neumann boundary conditions. Note that the linear Robin one can be reduced to the Neumann one by a simple transformation, as shown in \cite{BPS}, and so Theorem A is applied directly. However, the transformation from the nonlinear Robin one  \eqref{bdry-Robin} to the Neumann one is not direct.

Besides the above mentioned special boundary conditions, we will actually consider more general cases,
where the distribution of zeroes of $u(\xi_i(t),t)$ can be much more complicated.
Let $u(x,t)$ be a classical solution of \eqref{linear} in $(0,T)$. We consider the following cases:
$$
{\rm (N)} \hskip 40mm u(\xi_i(t),t)\not= 0 \mbox{ for all } t\in (0,T), \hskip 80mm
$$
this is the non-zero-Dirichlet condition on the boundary curve $x=\xi_i(t)$;
$$
{\rm (Z)} \hskip 35mm  \xi_i(t)\in C^1((0,T)),\ \ u(\xi_i(t),t) = 0 \mbox{ for all } t\in (0,T), \hskip 70mm
$$
this is the zero-Dirichlet condition on the boundary curve $x=\xi_i(t)$.
In other cases where \eqref{general-bdry} holds, we  call $s\in (0,T)$  a
{\it Z-moment of $u(\xi_i(t),t)$} when $ u(\xi_i(s), s)=0$, or a {\it N-moment of $u(\xi_i(t),t)$} when $ u(\xi_i(s), s)\not=0$, and
assume
$$
{\rm (H)} \hskip 15mm
\left\{
\begin{array}{l}
\mbox{the Z-moments of } u(\xi_i(t),t) \mbox{ are isolated, and for each Z-moment }s, \\
\xi'_i (s-0) \mbox{ exists},\  (-1)^i  \cdot u(\xi_i (t),t) \cdot  u_x (\xi_i (s),s)\geq 0 \mbox{ for } 0<s-t\ll 1, \\
 u(\xi_i(t),t)\cdot u(x,s) \geq 0 \mbox{\ \ for\ \ } 0<t-s \ll 1,\  |x-\xi_i(s)| \ll 1.
 \end{array}
 \right.
 \hskip 35mm
$$
We will see below that any solution of \eqref{linear} with zero-Neumann, Robin
or Stefan free boundary conditions satisfies the assumption (H).  Our main result is  the following.

%%%%%%%%%%%%%%%%

\begin{thm}\label{thm:main}
Let $T>0$, $\xi_1(t) < \xi_2(t)$ be two continuous functions in $(0,T)$ and $u(\cdot ,t)\not\equiv 0$
for any $t\in (0,T)$ be a classical solution of \eqref{linear}. Assume \eqref{smoothy} holds.
If one of  {\rm (N)}, {\rm (Z)} and {\rm (H)} holds for $u(\xi_i(t),t)\ (i=1,2)$, then all the conclusions in Theorem A remain hold.
\end{thm}

\begin{remark}\rm
In 1998, the conditions \eqref{smoothy} in Theorem A was weakened by Chen in \cite{ChXY} as
\begin{equation}\label{smoothy1}
a, a^{-1}, a_t, a_x, b, c\in L^\infty.
\end{equation}
In fact, by using the new variables
$$
\alpha(t):= \int_{\xi^0_1}^{\xi^0_2} a(z,t)^{-\frac12} dz,\quad
y(x,t):= \alpha(t)^{-1} \int_{\xi^0_1}^x a(z,t)^{-\frac12} dz,\quad
s(t) := \int_0^t \alpha(t)^{-2}  dt
$$
as in \cite{Ang}, the new unknown $w(y,s) = w(y(x,t),s(t)) :=u(x,t)$ solves
$$
w_s = w_{yy} +\tilde{b}(y,s) w_y + \tilde{c}(y,s) w,\quad y\in (0,1), s \in (0,S),
$$
for some $S>0$, and $\tilde{b},\ \tilde{c}$ depending on $a, a^{-1}, a_t, a_x, b, c$.
Hence, under the assumption \eqref{smoothy1}, $w$ satisfies the inequality $|w_s - w_{yy}|\leq M_1 |w_y| +M_0 |w|$
for some $M_0,\ M_1 >0$.
By the strong unique continuation property for $W^{2,1}_{p, loc}$ solution $w$,
Chen showed in \cite{ChXY} that (a)-(c) of Theorem A hold for $w$, so does for $u$.
Using the results in \cite{ChXY}, we see that the assumption \eqref{smoothy} in both Theorems
\ref{thm:Robin} and \ref{thm:main} can be weakened as \eqref{smoothy1}.
\end{remark}

In 1996, Chen and Pol\'{a}\v{c}ik \cite{ChPo} proved the analogue of Theorem A for radially symmetric
solution in a ball in $\R^N$: $u(x,t)= u(|x|,t) =u(r,t)$ of the problem
\begin{equation}\label{RN-P}
 u_t = \Delta u + c(|x|,t) u =  u_{rr} + \frac{N-1}{r} u_r + c(r,t)u, \quad 0<r< \xi^0, 0<t<T,
\end{equation}
with Dirichlet boundary condition $u(\xi^0, t)\equiv 0$, where $c\in L^\infty$. Using the results in
\cite{ChPo} and using a similar approach as in the next section one can obtain the following result.

\begin{thm}\label{thm:RN}
Let $T>0$, $\xi(t) >0$ be a continuous function in $(0,T)$ and $u(\cdot ,t)\not\equiv 0$
for any $t\in (0,T)$ is a classical solution of \eqref{RN-P}, with $\xi^0$ being replaced by $\xi(t)$. Assume $c\in L^\infty$,  one of  {\rm (N)}, {\rm (Z)} and {\rm (H)} holds for $u(\xi (t),t)$.
Then all the conclusions in Theorem A remain hold.
\end{thm}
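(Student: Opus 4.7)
The plan is to mimic the proof of Theorem \ref{thm:main}, substituting the Chen--Pol\'{a}\v{c}ik result \cite{ChPo} for the Angenent Theorem A that underpins that proof. The setup is essentially a one-sided version of Theorem \ref{thm:main}: radial symmetry supplies a Neumann-type condition $u_r(0,t)=0$ at the inner ``boundary'' $r=0$ for free, so only the outer boundary $r=\xi(t)$ can exhibit the (R)-type behavior that activates the hypotheses (H1)--(H3). In particular, the Z/N-moment bookkeeping only needs to be carried out for the single function $w(t):=u(\xi(t),t)$.

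First I would partition $(0,T)$ according to the Z- and N-moments of $w$, as in Section 2. Each maximal closed Z-interval $[p_k,q_k]\subset\{w=0\}$ with $p_k<q_k$ comes with $\xi\in C^1([p_k,q_k])$ by (H1), so on the parabolic region $\{(r,t):0\le r\le\xi(t),\ t\in[p_k,q_k]\}$ I can straighten the moving outer boundary via the time-dependent change of variable $\rho=r\,\xi^0/\xi(t)$. A direct computation shows that $v(\rho,t):=u(r,t)$ satisfies an equation of the same form as \eqref{RN-P} on the fixed ball $\{0<\rho<\xi^0\}$ with Dirichlet condition $v(\xi^0,t)\equiv 0$, and with a new zeroth-order coefficient in $L^\infty$ plus an added drift $-\rho\xi'(t)/\xi(t)$ which is likewise $L^\infty$ (in fact the analysis in \cite{ChPo} accommodates such lower-order perturbations). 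Hence the Chen--Pol\'{a}\v{c}ik theorem applies on each such Z-interval and delivers (a)--(c) of Theorem A there for $\Z_{[0,\xi(t)]}[u(\cdot,t)]$.

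On open N-intervals of $w$, I would argue exactly as in the 1D case: since $u(\xi(t),t)\ne 0$ the boundary-adjacent zeros of $u(\cdot,t)$ stay uniformly inside $[0,\xi(t))$, no zero can cross $r=\xi(t)$ without producing a Z-moment, and counted in the interior the zero number is finite and non-increasing by \cite{ChPo}. The transitional Z-moments (NZN-, NZZ-, ZZN-, ZZZ-moments) are handled verbatim as in the proof of Theorem \ref{thm:main} using (H2) and (H3): (H2) fixes the sign of $u_r(\xi(s),s)$ on approach so that two colliding interior zeros merging into $\xi(s)$ force a strict drop of $\Z$, while (H3) guarantees that when the orbit enters a Z-regime from an N-regime no spurious interior zeros appear on the other side of $s$. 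The finiteness assertion in (iii) that only finitely many ZZN- and NZN-moments occur follows by the same accounting argument as in the 1D case, since each such moment strictly decreases the (finite) integer-valued $\Z(t)$.

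The main technical obstacle is the singular coefficient $\frac{N-1}{r}u_r$ at $r=0$. However, this is precisely what \cite{ChPo} is designed to absorb: the radial smoothness forces $u_r(0,t)=0$, and after the straightening map the singular term keeps the same structure, so the hypotheses of \cite{ChPo} remain satisfied with $c\in L^\infty$ replaced by a possibly modified (still $L^\infty$) zeroth-order coefficient plus an $L^\infty$ drift in $\rho$. Once this verification is done, the remaining work is purely combinatorial bookkeeping identical to the one-dimensional proof, and I expect no further difficulty beyond carefully tracking which Z-moments produce strict decrease of $\Z$ and which are absorbed into an interval of Z-moments.
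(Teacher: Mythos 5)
Your proposal follows exactly the route the paper intends: the paper gives no explicit proof of Theorem \ref{thm:RN}, stating only that it follows by the argument of Theorem \ref{thm:main} with the Chen--Pol\'{a}\v{c}ik result \cite{ChPo} replacing Angenent's Theorem A, and your one-sided bookkeeping at $r=\xi(t)$ together with the boundary-straightening on Z-intervals is precisely that argument. The only point worth double-checking is that your change of variable $\rho=r\,\xi^0/\xi(t)$ introduces a time-dependent factor $(\xi^0/\xi(t))^2$ on the Laplacian and an extra $L^\infty$ drift, so one needs either a further time rescaling or the fact that \cite{ChPo} tolerates such lower-order and coefficient perturbations --- a verification the paper itself also leaves implicit.
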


\noindent
In particular, when $u$ satisfies \eqref{RN-P} and a Robin boundary condition
$u_r (\xi(t),t) + g( t, u(\xi(t),t))=0$ for $t\in (0,T)$ and some function $g$ satisfying the conditions for $g_i$ in Theorem \ref{thm:Robin},
then (H) holds and the conclusions in Theorem A  hold (see the proof of Theorem \ref{thm:Robin}).

\section{Proof of the Main Theorems}
We first give some preliminary lemmas, and then prove Theorems \ref{thm:main} and \ref{thm:Robin}. The latter will be proved as a consequence of the former. For simplicity we write
$w_i(t):= u(\xi_i(t),t), \ i=1,2$.

\subsection{Some preliminary lemmas}
In this subsection we always consider the special case: $\xi_2(t)\equiv X$ and
$u(X,t)\not= 0$ for all $t\in (0,T)$.  Denote $I(t):= [\xi_1 (t), X]$ as before.

\begin{lem}\label{lem:NNN and ZZZ}
Assume {\rm (Z)} holds for $w_1(t) := u(\xi_1(t),t)$, or $s\in (0,T)$ is a N-moment of $w_1(t)$. Then the conclusions {\rm (a)-(c)} in Theorem A hold in a neighborhood of $s$.
\end{lem}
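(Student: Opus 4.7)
The plan is to reduce each of the two cases to a setting in which the classical Theorem A applies directly on a \emph{fixed} spatial domain, so that (a)--(c) are inherited through a simple correspondence of zeroes. The two cases call for opposite tricks: in the N-moment case we shrink the interval so that both lateral boundaries become non-zero Dirichlet boundaries, while in the ZZZ-moment case we straighten the moving boundary and read off the zero-Dirichlet condition on the straightened side.

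For the N-moment case, we have $w_1(s) = u(\xi_1(s), s) \ne 0$, and $u(X, t) \ne 0$ by the standing hypothesis. By continuity of $u$, there exist $\delta>0$ and a time interval $J \ni s$ such that $u$ is of constant, non-zero sign on each of the two rectangles $\{(x,t): |x-\xi_1(s)|\le 2\delta,\, t\in J\}$ and $\{(x,t): x\in [X-\delta, X],\, t\in J\}$; shrinking $J$ further (using continuity of $\xi_1$) we may arrange $\xi_1(t) \in (\xi_1(s)-\delta, \xi_1(s)+\delta)$ for $t\in J$. Then no zeroes of $u(\cdot,t)$ lie in $[\xi_1(t), \xi_1(s)+\delta]\cup [X-\delta,X]$, so $\Z(t)$ coincides with the number of zeroes of $u(\cdot,t)$ on the fixed interval $[a,b] := [\xi_1(s)+\delta, X-\delta]$. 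On $[a,b]$ we have non-zero values of $u$ at both endpoints for every $t\in J$, so Theorem A applies and gives (a)--(c) on $J$.

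For the ZZZ-moment case, $s$ lies in the interior of some maximal interval $[p^1_k,q^1_k]$ of Z-moments of $w_1$; by (H1), $\xi_1\in C^1$ on a neighborhood of $s$, and $u(\xi_1(t),t)\equiv 0$ there. I would straighten the left boundary via $y=(x-\xi_1(t))/(X-\xi_1(t))$ and set $v(y,t):=u(x,t)$, so that $v$ solves a new parabolic equation on the fixed domain $[0,1]$ with Dirichlet data $v(0,t)\equiv 0$ and $v(1,t)=u(X,t)\ne 0$. The transformed coefficients are combinations of $a,b,c$ and $\xi_1, \xi_1'$, all of which are bounded on the neighborhood (using the regularity in \eqref{smoothy}, or Chen's weaker \eqref{smoothy1} noted in the remark). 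Since the change of variables is, for each fixed $t$, a smooth diffeomorphism of $I(t)$ onto $[0,1]$, the zeroes of $v(\cdot,t)$ on $[0,1]$ are in bijection with the zeroes of $u(\cdot,t)$ on $I(t)$, preserving multiplicity. Applying Theorem A to $v$ yields (a)--(c) for $u$ in a neighborhood of $s$.

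The main obstacle is verifying that the straightening in the ZZZ-moment case produces coefficients with enough regularity to invoke Theorem A, given that (H1) only provides $\xi_1\in C^1$. Under \eqref{smoothy} this requires care because the hypothesis includes $b_t\in L^\infty$; one either appeals to the weakening in Remark~1.2 (where only $L^\infty$ bounds on $a,a^{-1},a_t,a_x,b,c$ are used) or notes that the new lower-order coefficient depends on $\xi_1'$ in a way that remains in $L^\infty$. The bijective zero-correspondence and the preservation of multiplicity under the diffeomorphism are routine but should be recorded explicitly.
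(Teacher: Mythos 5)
Your proposal is correct and follows essentially the same route as the paper: for a N-moment, trim off a boundary strip where $u$ has constant sign so that the zero count reduces to a fixed interval with non-zero Dirichlet data, and for a ZZZ-moment, straighten the boundary via $y=(x-\xi_1(t))/(X-\xi_1(t))$ and apply Theorem A with zero Dirichlet data at $y=0$ (the paper likewise relies on the weakened regularity \eqref{smoothy1} for the transformed coefficients, exactly the point you flag). The only cosmetic difference is that you also trim near $x=X$, which is unnecessary since $X$ is already a fixed non-zero Dirichlet boundary in this setting.
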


\begin{proof}
When $s$ is a N-moment of $w_1$, the proof is included in the argument behind  Lemma 2.2 of \cite{BPQ}.
More precisely, we have $w_1(s) = u(\xi_1 (s),s)\not= 0$. Then by continuity we can find $\epsilon>0$ small and $\tilde{x}$ with $ 0< \tilde{x} -
\xi_1 (s) \ll 1$ such that $\xi_1 (t) < \tilde{x} < X$ for $t\in J := [s-\epsilon , s + \epsilon]$, and $u (x,t)\not= 0$ for all $x\in [\xi_1 (t), \tilde{x}]$
and $t\in J$. Therefore, $\Z_{I(t)} [u(\cdot,t)] = \Z_{[\tilde{x}, X]} [u(\cdot,t)] $
for all $t\in J$. The conclusions then follow from Theorem A.

Next we consider the case where (Z) holds for $w_1(t)$. Using the new variable
\begin{equation}\label{variable change}
y= \frac{x-\xi_1 (t)}{X - \xi_1 (t)} \ \ \Leftrightarrow \ \ x= \varphi(y,t) :=  \xi_1 (t) + (X -\xi_1 (t)) y
\end{equation}
we see that the new unknown $v(y,t):= u \big( \varphi(y,t), t\big)$ solves
\begin{equation}\label{eq-w}
 v_t = \tilde{a}(y,t) v_{yy} + \tilde{b}(y,t) v_y + \tilde{c}(y,t) v, \quad  0<y<1,\ t\in (0,T),
\end{equation}
with
\begin{equation}\label{tilde a cond}
\left\{
\begin{array}{l}
\displaystyle \tilde{a}(y,t) := \frac{a(\varphi(y,t), t)}{( X - \xi_1 (t))^2 }
\mbox{ satisfying } \tilde{a},\ \tilde{a}^{-1},\ \tilde{a}_t,\ \tilde{a}_y\in L^\infty, \\
\displaystyle  \tilde{b}(y,t) := \frac{\xi'_1 (t) - \xi'_1(t) y }{ X - \xi_1 (t) }
+\frac{b(\varphi (y,t),t)} {X -\xi_1 (t)} \in L^\infty_{loc} ([0,1]\times (0,T)) ,\\%, \ \ \mbox{locally in } t\in (0,T),\\
\tilde{c}(y,t) := c(\varphi (y,t),t) \in L^\infty.
\end{array}
\right.
\end{equation}
Here (and only here) we use  the assumption $\xi_1\in C^1$ in (Z).  Since
$$
v(0,t) = u(\xi_1 (t),t)=0  \mbox{ and } w(1,t)\not= 0 \mbox{ for }t\in (0,T),
$$
we see that the conclusions in Theorem A hold for $v$. So they also hold for $u$ since $\Z_{I(t) } [u(\cdot,t)] = \Z_{[0,1]}[v(\cdot,t)]$.
\end{proof}

From this lemma we see that on each side of $\Omega$, $u$ can not identically
to be {\it multiple zeroes} in a time interval. Otherwise, the number of its zeroes
decreases strictly infinitely many times.  This is impossible.
Now we show that the number of zeroes
of $u$ in $I(t) $ is finite from the {\it very beginning}.

\begin{lem}\label{lem:Z<infty}
There exists a sequence $\{s_k\}\subset (0,T)$ decreasing to $0$ such that
$\Z_{I(s_k)} [u(\cdot, s_k)]$ $ <\infty$.
\end{lem}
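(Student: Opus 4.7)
The plan is to deduce this directly from Lemma~\ref{lem:NNN and ZZZ} via a simple dichotomy on $w_1(t) := u(\xi_1(t),t)$. Recall that in this subsection $\xi_2 \equiv X$ with $u(X,\cdot) \neq 0$ on $(0,T)$, so only the left boundary behaviour is in question. The core one-step statement I would first prove is: for every $\epsilon \in (0,T)$ there exists $\tau \in (0,\epsilon)$ with $\Z_{I(\tau)}[u(\cdot,\tau)] < \infty$. Applying this iteratively with $\epsilon_1 := T/2$ and $\epsilon_{j+1} := \min\{\tau_j, 1/(j+1)\}$ will then produce a strictly decreasing sequence $\tau_j \downarrow 0$ with the stated property.

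To establish the one-step claim, I would split $(0,\epsilon)$ into two exhaustive cases according to the behaviour of $w_1$ there. \textbf{Case 1:} there exists some $\tau \in (0,\epsilon)$ with $w_1(\tau) \neq 0$. Then $\tau$ is a N-moment of $w_1$, and Lemma~\ref{lem:NNN and ZZZ} immediately gives $\Z(\tau) < \infty$. \textbf{Case 2:} $w_1 \equiv 0$ on all of $(0,\epsilon)$. In this case I pick any $\tau$ in the interior of $(0,\epsilon)$; then both one-sided neighbourhoods of $\tau$, shrunk to lie inside $(0,\epsilon)$, consist entirely of Z-moments, so $\tau$ is by definition a ZZZ-moment of $w_1$. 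Lemma~\ref{lem:NNN and ZZZ} then again yields $\Z(\tau) < \infty$.

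The hard part, such as it is, amounts to nothing more than recognising that the ``some N-moment exists'' versus ``every point is a Z-moment'' alternative on $(0,\epsilon)$ is exhaustive, and that in the latter subcase interior points automatically qualify as ZZZ-moments (so Lemma~\ref{lem:NNN and ZZZ} applies without needing to invoke the full hypotheses (H2) or (H3); no mixed NZ$*$, $*$ZN, $*$ZX or XZ$*$ moments ever come into play in this reduction). The result is therefore essentially a corollary of Lemma~\ref{lem:NNN and ZZZ} combined with the reduction to the special geometric setting $\xi_2 \equiv X$ studied in this subsection.
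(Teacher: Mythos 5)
There is a genuine gap, and it comes from a misreading of the setting. You assert that ``in this subsection $\xi_2\equiv X$ with $u(X,\cdot)\neq 0$, so only the left boundary behaviour is in question.'' That special-case hypothesis is introduced only for Lemma \ref{lem:NNN and ZZZ}; the present lemma is explicitly about the full interval $[\xi_1(\tau_j),\xi_2(\tau_j)]$ with \emph{both} boundaries moving and with $w_2(t):=u(\xi_2(t),t)$ allowed to vanish (the sentence preceding the lemma makes this clear, as does the notation $\xi_2(\tau_j)$ in the statement). Your dichotomy on $w_1$ alone therefore proves nothing about the zeroes accumulating at the right boundary, and the right boundary is precisely where the special-case assumption you invoke is unavailable. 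The paper's proof runs essentially your dichotomy, but simultaneously on $w_1$ and $w_2$: if both boundaries consist of Z-moments near $t=0$, it straightens both sides via the change of variables \eqref{variable change} (using (H1) to keep the new coefficient $\tilde b$ in $L^\infty$) and applies Theorem A directly; if one boundary has N-moments accumulating at $0$, it restricts to the corresponding open intervals and again reduces to Lemma \ref{lem:NNN and ZZZ}/Theorem A, treating the other boundary by the same alternative inside each such interval.

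The gap is repairable along lines close to what you wrote: since $u(\cdot,0)\not\equiv 0$, pick $X$ with $u(X,0)\neq 0$, so $u(X,t)\neq 0$ for $t\in[0,\epsilon]$ by continuity, split $[\xi_1(t),\xi_2(t)]$ at $X$, and run your two-case argument separately on $[\xi_1(t),X]$ and on $[X,\xi_2(t)]$ (the latter needs the mirror image of Lemma \ref{lem:NNN and ZZZ} for the right boundary). One must then also intersect the resulting ``good'' time sets from the two halves: this works because in each half the alternative is either ``identically Z near $0$'' (good on a full interval $(0,\epsilon)$) or ``open N-intervals accumulating at $0$'', and a nested choice of subintervals produces common times $\tau_j\downarrow 0$. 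As written, however, your proof does not address the right boundary at all, so it does not establish the statement.
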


\begin{proof}
In case $w_1$ satisfies (Z), the conclusion is proved in the previous lemma.
In case $w_1$ has a sequence of N-moments $\{s_k\}$ decreasing to $0$,
Lemma \ref{lem:NNN and ZZZ} also implies that  $\Z_{I(s_k)}[u(\cdot, s_k)] <\infty$.
\end{proof}

Now we consider a Z-moment $s$ of $w_1 (t)= u(\xi_1 (t),t)$.  When it is isolated, we have
$w_1(s)=0$ but $w_1(t)\not= 0$ in short periods before and after $s$. We will show that, under
the condition (H),  the zero $\xi_1(s)$ of $u(\cdot, s)$ is not a new one, it actually comes from an interior zero of
$u(\cdot, t)$ for $t<s$.  Again we consider the case for $u(\xi_2(t),t)= u(X,t)\not= 0$.

\begin{lem}\label{lem:NZ*}
Assume that $w_1$ satisfies {\rm (H)} and $s$ is one of its  isolated zeroes. Then
\begin{equation}\label{NZ*-non-increasing}
\Z(s) \leq \Z(t) < \infty \mbox{\ \ for\ }t \mbox{ satisfying } 0<s-t\ll 1.
\end{equation}
\end{lem}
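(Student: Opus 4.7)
The plan is to fix the left boundary via a change of variables, apply Theorem A on time intervals strictly preceding $s$ (where both boundary values are non-zero), and then pass to the limit $t \to s^-$ using (H2) to track the zero emerging at the left boundary.

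Without loss of generality assume $u(\xi_1(t),t) > 0$ for $0 < s-t \ll 1$, so by (H2), $u_x(\xi_1(s),s) \le 0$. Using the change of variable $y = (x-\xi_1(t))/(X-\xi_1(t))$ from the proof of Lemma \ref{lem:NNN and ZZZ}, I set $w(y,t) := u(\varphi(y,t),t)$; then $w$ solves a parabolic equation on $[0,1]$ with coefficients satisfying \eqref{tilde a cond}, $w(0,t) = u(\xi_1(t),t)$, $w(1,t) = u(X,t) \ne 0$, $\Z(t) = \Z_{[0,1]}[w(\cdot,t)]$, and $w_y(0,s) \le 0$ by the chain rule. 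For each $\mu > 0$, both boundary values of $w$ have constant non-zero sign on $[s-\epsilon_0, s-\mu]$, so Theorem A gives that $\Z(\cdot)$ is finite and non-increasing there; thus $N_0 := \lim_{t \to s^-} \Z(t)$ exists as a finite integer. This already proves $\Z(t) < \infty$ for $0 < s-t \ll 1$, and reduces the lemma to showing $\Z(s) \le N_0$.

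Zeros of $w(\cdot,s)$ cannot accumulate anywhere in $[0,1)$: at an accumulation point all spatial derivatives of $w(\cdot,s)$ would vanish by iterated Rolle, whereupon strong unique continuation for the parabolic equation would force $w \equiv 0$ in a space-time neighborhood, contradicting $u(\cdot,t) \not\equiv 0$. So the zeros of $w(\cdot,s)$ form a finite list $0 = y_0 < y_1 < \cdots < y_M < 1$ with $\Z(s) = M+1$. For each interior zero $y_k$ with $k \ge 1$, I would pick a small neighborhood on which $w(\cdot,s)$ does not vanish at the endpoints and apply Theorem A to that fixed sub-interval: at least one zero of $w(\cdot,t)$ persists near $y_k$ for $t < s$ close to $s$. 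Taking these neighborhoods disjoint accounts for $M$ distinct zeros of $w(\cdot,t)$.

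The main obstacle is producing an additional zero near $y = 0$ for $t < s$ close to $s$. When $w_y(0,s) < 0$, this is immediate: $w(\cdot,s) < 0$ just to the right of $0$, which, combined with $w(0,t) > 0$ for $t < s$, yields the zero by the intermediate value theorem. The delicate case is $w_y(0,s) = 0$; here I would show $w(\cdot,s)$ cannot remain $\ge 0$ on any right-neighborhood of $0$. Assuming otherwise, $y = 0$ would be a minimum of $w(\cdot,s)$, forcing $w_{yy}(0,s) \ge 0$; on the other hand, the PDE at $(0,s)$ (where $w = w_y = 0$) reads $w_t = \tilde a\, w_{yy}$, and the chain rule together with $w(0,t) > 0 = w(0,s)$ for $t < s$ (using $\xi_1'(s^-)$ from (H2)) yields $w_t(0,s) \le 0$, hence $w_{yy}(0,s) \le 0$, so $w_{yy}(0,s) = 0$. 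Bootstrapping on higher spatial and time derivatives via the PDE forces $w$ to vanish to infinite order at $(0,s)$, and strong unique continuation again produces $w \equiv 0$ nearby, a contradiction. Hence $w(\cdot,s) < 0$ immediately right of $0$, and the intermediate value theorem supplies the extra zero. In all cases $\Z(t) \ge M+1 = \Z(s)$ for $t < s$ close to $s$, which is \eqref{NZ*-non-increasing}.
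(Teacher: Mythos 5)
Your overall architecture is reasonable and, up to the boundary point, essentially sound: the reduction to $y\in[0,1]$, the use of Theorem A on compact time intervals before $s$ to get finiteness, and the persistence of each \emph{interior} zero $y_k$ of $w(\cdot,s)$ backward in time via Theorem A on a small sub-rectangle with non-vanishing lateral data are all fine. The gap is concentrated exactly where the lemma is delicate, namely at the boundary zero $y=0$, and it occurs in two places. First, your exclusion of accumulation of zeros of $w(\cdot,s)$ \emph{at $y=0$} via ``iterated Rolle $+$ all spatial derivatives vanish'' is not available: under \eqref{smoothy} (or \eqref{smoothy1}) the coefficients are merely $L^\infty$ and $u$ is only a classical solution, so $w(\cdot,s)$ need not have derivatives beyond second order, and the parabolic strong unique continuation theorems you invoke require vanishing of infinite order in the parabolic sense, which pointwise vanishing of finitely many derivatives does not supply. (Interior accumulation points are harmlessly excluded by Theorem A on a small rectangle, but at $y=0$ Theorem A does not apply because $w(0,s)=0$.) Second, and more seriously, in the degenerate case $w_y(0,s)=0$ your bootstrap stalls: the minimum argument plus the equation at $(0,s)$ only pins down $w_{yy}(0,s)=0$, and to go further you would have to differentiate the PDE in $y$, which the $L^\infty$ regularity of $\tilde a,\tilde b,\tilde c$ does not permit; even formally the alternation of one-sided sign conditions does not propagate to third and higher order without additional input. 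So the claim that $w(\cdot,s)<0$ immediately to the right of $0$ is not established by your argument, and without it the extra zero needed to reach $\Z(t)\ge M+1$ is missing.

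The paper closes exactly this point by a different device: for $t<s$ all zeros of $u(\cdot,t)$ are simple and lie on finitely many smooth curves $\gamma_1(t)<\dots<\gamma_m(t)$; each has a limit $x_k$ as $t\nearrow s$ (else $u(\cdot,s)\equiv 0$ on an interval, contradicting unique continuation), and if the leftmost limit satisfied $x_1>\xi_1(s)$ one would have $u>0$ in the region between the lateral boundary and $\gamma_1$, whence the \emph{Hopf boundary point lemma} at the corner $(\xi_1(s),s)$ (this is where the hypothesis in (H2) that $\xi_1'(s-0)$ exists is used) gives $u_x(\xi_1(s),s)>0$, contradicting the sign condition in (H2). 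Thus $x_1=\xi_1(s)$: the boundary zero at time $s$ is the limit of an interior zero curve, so $\Z(s)\le m=\Z(t)$, and in particular $\Z(s)<\infty$ falls out without any separate non-accumulation argument at $y=0$. If you replace your bootstrap by this Hopf-lemma step (applied in the region to the left of the leftmost zero curve), your proof goes through; as written, the degenerate case is not handled.
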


\begin{proof}
(1)  Denote by $J_s = (t_0,s)$ for some $t_0\in [0,s)$ an interval containing only N-moments of $w_1$.
By the previous lemmas,
$\Z (t)<\infty$ for all $t\in J_s$,  and there exist at most finitely many values of $t\in J_s$ such that $u(\cdot,t)$
has multiple zeroes in the interior of $I(t)$. Thus we can find a small $\epsilon>0$ such that for any $t\in J_1:=
(s-\epsilon , s) \subset J_s$ , $u(\cdot,t)$ has only simple zeroes  in the closed interval $I(t)$.
Due to the simplicity, the zeroes  of $u(\cdot,t)$ for  $t\in J_1$ can be expressed as
smooth curves:
 \[
 \mbox{$x=\gamma_1(t),..., x=\gamma_m(t)$, with $ \xi_1(t) <\gamma_k (t)<\gamma_{k+1}(t)< X$ for $k=1,..., m-1$.}
 \]

\medskip
(2) For each $k\in\{1,..., m\}$, we show the existence of the limit of $\gamma_k(t)$ as $t\nearrow s$. Clearly
 \[
 \mbox{ $x_k :=\liminf_{t\nearrow s}\gamma_k (t)\geq \xi_1 (s)$ \ \ and \ \ $x_k^*:=\limsup_{t\nearrow s}\gamma_k (t)< X$.}
  \]
  If $x_k< x_k^*$, then $u(x,s)\equiv 0$ for $x\in [x_k, x_k^*]$.
  We may now apply \cite[Theorem 2]{F} or the results in \cite[section 3]{ChXY} to $u$
  over the region $\{(x,t) \mid \xi_1(t)\leq x \leq X, s-\epsilon_1 \leq t \leq s\}$,
  with $\epsilon_1 >0$ sufficiently small, to conclude that $u(x,s)\equiv 0$ for $x\in  I(s)$,
  contradicting our assumption $u(\cdot,t)\not\equiv 0$ for any $t\in (0,T)$. Therefore $x_k:=\lim_{t\nearrow s}\gamma_k (t)$ exists for every $k\in \{1,..., m\}$.

\medskip
(3) To show $x_1= \xi_1(s)$. Assume without loss of generality that $u(\xi_1(t),t)>0$ for
$t\in (s-\epsilon, s)$. Arguing indirectly we assume that $\xi_1 (s) < x_1$. Then in the region $D_1
:=\{(x,t) \mid \xi_1(t) < x <\gamma_1 (t), s-\epsilon  <t \leq s\}$, we have
$u(x,t)>0$ by the maximum principle. Since $u(\xi_1(s), s)=0$ and since $\xi'_1(s-0) $ exists by (H), we can apply
the Hopf boundary lemma to deduce that $u_x (\xi_1(s),s) >0$, contradicting our assumption (H).
This proves $\xi_1(s)= x_1$.

\medskip
(4) Using the strong maximum principle in the region $D_k :=\{(x,t): \gamma_k(t)<x<
\gamma_{k+1}(t),\;  t\in J_1\}$, we conclude that $u(x,s)\not=0$ for $x\in (x_k, x_{k+1})$ when $x_k < x_{k+1}$.

\medskip
(5) From (3) and (4) we conclude that $\Z(s) \leq \Z (t) = m$ for $t$ with $0< s-t \ll 1 $.
This means that \eqref{NZ*-non-increasing} holds for such $t$.
\end{proof}

Next we show that, under the assumption (H),   the number of zeroes  of $u$ decreases strictly after each Z-moment.

\begin{lem}\label{lem:*ZN}
Under the same assumptions as in  the previous lemma. Then
\begin{equation}\label{*ZN-strict-decrease}
\Z (s)  > \Z (t) \mbox{ for }t \mbox{ with } 0<t -s \ll 1.
\end{equation}
\end{lem}

\begin{proof}
Denote by $J'_s = (s, t^0)$ for some $t^0>s$ an interval containing only N-moments of $w_1$.
Since $\Z (s)  <\infty$ by \eqref{NZ*-non-increasing}, we write $\xi_1 (s) = z_1 < z_2 <...< z_n < X$
as the different zeroes of $u(\cdot, s)$ in $I(s):= [\xi_1(s), X]$.
Denote $z^*=(z_1 +z_2)/2$. We claim that there exists a small $\epsilon >0$ such that
$u(x,t)\not=0$ in the domain $D := \{(x,t) \mid \xi_1 (t)\leq x \leq z^*, s<t< s +\epsilon \}$.
In fact, $u(z^*, s)\not=0$. For definiteness, we assume that $u(z^*,s)>0$. Hence
$u(x,s)>0$ for $x\in (\xi_1(s), z^*]$, and, by continuity, there exists $\epsilon \in (0, t^0 -s)$
such that $u(z^*,t)>0$ for $t\in [s, s+\epsilon]$. Using the assumption (H) we have
$w_1 (t) = u(\xi_1(t), t) > 0$ in $(s, s+\epsilon]$, provided $\epsilon $ is sufficiently small.
Then our claim follows by using the maximum principle in the region $D$.
This claim means that the zero $\xi_1(s)$ of $u(\cdot, s)$ will disappear in $D$.
On the other hand, in the region $[z^*,X]\times [s,s+\epsilon]$, the zero number diminishing property
follows from Theorem A. This proves \eqref{*ZN-strict-decrease}.
\end{proof}

\subsection{Proof of the main results}
{\it Proof of Theorem \ref{thm:main}}.
For any $t_0\in (0,T)$, by our assumption $u(\cdot, t_0)\not\equiv 0$ we can find
$X= X(t_0)\in (\xi_1(t_0), \xi_2(t_0))$ with $u(X, t_0)\not=0$. By continuity, there exists
$\epsilon = \epsilon(X,t_0)$ such that
$$
\xi_1(t) < X < \xi_2(t) \mbox{ and } u(X,t)\not=0 \mbox{ for all } t\in J_\epsilon := [t_0 -\epsilon, t_0+\epsilon].
$$
Therefore, to prove the conclusions  (a)-(c) of Theorem A  near $t_0$,
we need only to prove them in two domains
$\Omega_1 (t_0,\epsilon) := \{(x,t)\mid \xi_1 (t)\leq x\leq X, t\in J_\epsilon \}$
and
$\Omega_2 (t_0,\epsilon) := \{(x,t)\mid X\leq x \leq \xi_2 (t), t\in J_\epsilon \}$,
respectively.
In $\Omega_1 (t_0,\epsilon)$, the conclusions have been proved by the lemmas in the previous subsection.
In $\Omega_2 (t_0, \epsilon)$, the proof is similar.
Finally, the conclusions hold in the whole time span $(0,T)$ since $t_0\in (0,T)$ is arbitrarily chosen.  This proves Theorem \ref{thm:main}.
\hfill $\Box$
\bigskip

\noindent
{\it Proof of Theorem \ref{thm:Robin}}.  As above we need only consider the case where $u(\xi^0_2, t)\not=0$ for
$t\in (0,T)$. If $u(\xi^0_1, t)\equiv 0$ or $u(\xi^0_1, t)\not= 0$ for all $t\in (0,T)$, then the conclusions follow from Theorem A. We now consider the other cases,  and show that the boundary conditions \eqref{bdry-Robin} actually imply (H).

Assume $t_1\in (0,T)$ is a N-moment of $u(\xi^0_1, t)$ and $s\in (t_1,T)$ is the smallest Z-moment bigger than $t_1$.
Then $u_x(\xi^0_1, s) = g_1(s, u(\xi^0_1,s)) = g_1(s,0) = 0$ and $J_1 := (t_1, s)$ is an interval consisting of N-moments of $u(\xi^0_1,t)$. Using Lemma A  we have $\Z(t)<\infty$ for $t\in J_1$. Noting $u_x(\xi^0_1, s) =0$  and using Lemma \ref{lem:NZ*} (as well as its proof) we have
$\Z(s)\leq \Z(t)<\infty$ for $t\in J_1$. Thus, the zeroes of $u(\cdot,s)$ in $[\xi^0_1, \xi^0_2]$ are isolated. Then applying the maximum principle in the region $D:= [\xi^0_1, \xi^0_1 +\epsilon]\times [s,s+\epsilon]$ (for some small $\epsilon$) as in the proof of Lemma \ref{lem:*ZN}, and combining with the first  boundary condition in \eqref{bdry-Robin} we conclude that $u$ remains positive or negative in $D\backslash\{(\xi^0_1, s)\}$. Therefore,  the Z-moment $s$ of $w_1(t):= u(\xi^0_1,t)$ is isolated,
and so the last condition in (H) holds: $u(\xi^0_i,t)\cdot u(x,s) > 0$ for  $ 0<t-s \ll 1$ and $ |x-\xi^0_1 | \ll 1$. Thus, the condition (H) is derived from \eqref{bdry-Robin}, and so Theorem \ref{thm:Robin} follows from Theorem \ref{thm:main}. \hfill $\Box$

%%%%%%%%%%%%%%
%%%%%%%%%%%%%%

\section{An Example}
We now use our results to consider the following problem with nonlinear Robin and free boundary conditions:
\begin{equation}\label{ex-p}
\left\{
\begin{array}{ll}
u_t = a(x,t) u_{xx} + b(x,t)u_x + f(x,t,u), & 0<x<h(t), \ t>0,\\
u_x(0,t) = g(t, u(0,t)) , & t>0,\\
u(h(t),t)=0,\ h'(t)=- u_x(h(t),t), & t>0,\\
h(0)=h_0, \ u(x,0)=u_0(x)\geq 0 , & 0\leq x\leq h_0 ,
\end{array}
\right.
\end{equation}
Assume, with $\Omega := [0,\infty)\times (0,\infty)$, $\alpha \in (0,1)$,
$$
{\rm (H1)}\hskip 3mm
\left\{
 \begin{array}{l}
 a, a^{-1}, a_t, a_x, b \in L^\infty(\Omega),\ a, b\in C^\alpha_{loc} (\Omega),\ f\in C^1 (\R^3),\ g\in C^1(\R^2),\\
 f(x,t,0)\equiv 0 \mbox{ and } f(x,t,u)<0 \mbox{ for } t, x\in \R \mbox{ and } u > 1,\ g_u(t,u)\geq 0 \mbox{ for }t\in \R
 \mbox{ and } u\geq 0.
 \end{array}
\right. \hskip 30mm
$$
Reaction diffusion equations with such Stefan free boundary conditions are used to model the spreading of a new species and have been studied in \cite{DGP, DuLin} etc.

\begin{prop}\label{prop:ex1}
Assume {\rm (H1)} and that $a,b,f,g$ are all $T$-periodic in $t$. Then when $u_0\in C([0,h_0])$, the solution $u(x,t)$ of \eqref{ex-p} exists globally. As $t\to \infty$, $h(t)$ increases and converges to $h_\infty\in (h_0, \infty]$ and
$u$ converges to a time periodic solution in the topology $C^{2,1}_{loc}
([0,h_\infty)\times [0,T])$.
\end{prop}

\begin{proof}
By comparison it is easily seen that $0< u(x,t)\leq 1+\|u_0\|_{L^\infty}$ for $t>0,\ x\in [0,h(t)]$. This implies the time global existence of the solution of \eqref{ex-p} (cf. \cite{DuLin}).
For any $s\in (-T,T)$ we compare $u_1(x,t;s):= u(x,t-s)$ (which is defined for $x\in [0,h(t-s)]$) with
$u_2(x,t;s) := u(x,t-s+T)$ (which is defined for $x\in [0, h(t-s+T) ]$, with $h(t-s+T) >h(t-s)$ by the
free boundary condition and the Hopf lemma). Define $\eta^s(x,t) := u_2 (x, t;s) -u_1(x,t;s)$, then
\begin{equation}\label{eq-eta}
\left\{
 \begin{array}{ll}
 \eta^s_t = a(x,t-s)\eta^s_{xx} +b(x,t-s)\eta^s_x +c(x,t,s)\eta^s, & 0<x<h(t-s),\ t>s,\\
 \eta^s_x  (0,t)= g_u (t-s, \theta)  \eta^s (0,t),\quad
 \eta^s  (h(t-s),t) >0 ,\quad   & t>s,
\end{array}
\right.
\end{equation}
where $\theta = \theta_1 u_1 (0,t;s) + (1-\theta_1) u_2 (0,t;s)$ for some $\theta_1\in (0,1)$,
and
$$
c (x,t,s)  =  [f (x,t-s, u_2 ) - f (x,t-s, u_1) ]/(u_2 -u_1)\quad \mbox{for } u_1 \not= u_2,
$$
and extended continuously to the case where $u_1 = u_2$. Note that $a, b, c$ satisfies the
conditions in \eqref{smoothy1} by the hypothesis (H1) and the boundedness of $u$.
Since $\eta^s$ satisfies a Robin condition at $x=0$ and satisfies the non-zero-Dirichlet boundary condition (N) on the right boundary
$x=h(t-s)$, using the zero number diminishing property in our main results, we conclude that $\Z_{[0,h(t-s)] }[\eta^s (\cdot,t)]$ is finite, non-increasing and
decreasing strictly at most finitely many times. Hence, for large $t$,
$\eta^s (\cdot, t)$ has fixed number of zeroes and all of them are simple ones.
In the rest, one can use a similar argument as in \cite{BPS, Lou} to prove the conclusion.
\end{proof}

\begin{remark}\rm
In this proof,  the right end point of $u_1$ is always smaller than that of $u_2$. Actually, for any two solutions
$u_3$ (over $x\in [0,h_3(t)]$) and $u_4$ (over $x\in [0,h_4(t)]$) of the problem \eqref{ex-p} with different initial data, $h_3(t)-h_4(t)$ may change sign many times. Using the Stefan free boundary condition, one can show by a careful analysis that $u_3 -u_4$ satisfies
(H) on the domain boundary $x= \min\{h_3(t),h_4(t)\}$. In this sense we say that the zero number diminishing property in Theorem \ref{thm:main} is applied to one dimensional parabolic equations with Stefan free boundary conditions.
\end{remark}
%%%%%%%%%%%%%%%

{\bf Acknowledgement.} The author would like to thank the anonymous referees for valuable comments and the suggestion for the reference \cite{BPQ}.


\begin{thebibliography}{99}
\bibitem{Ang}
S. B.~Angenent,
{\em The zero set of a solution of a parabolic equation}, J. reine angew. Math., 390 (1988), 79-96.

\bibitem{BPQ}
T. Bartsch, P. Pol\'{a}\v{c}ik and P. Quittner,
{\em Liouville-type theorems and asymptotic behavior of nodal radial solutions of semilinear heat equations}, J. Eur. Math. Soc., 13 (2011), 219-247.


\bibitem{BPS}P. Brunovsk\'{y}, P. Pol\'{a}\v{c}ik and B. Sandstede,  {\em Convergence in general periodic parabolic equations in one space dimension},  Nonl. Anal.,  18 (1992), 209-215.

\bibitem{ChXY} X. Y. Chen,
{\em A strong unique continuation theorem for parabolic equations}, Math. Ann.,  311 (1998), 603-630.

\bibitem{ChPo}X. Y. Chen and P. Pol\'{a}\v{c}ik,
{\em Asymptotic periodicity of positive solutions of reaction diffusion equations on a ball},
J. reine angew. Math., 472 (1996), 17-51.

\bibitem{DGP} Y. Du, Z.M. Guo and R. Peng,
{\em A diffusion logistic model with a free boundary in time-periodic environment}, J. Funct. Anal., 265 (2013), 2089--2142.

\bibitem{DuLin} Y.~Du and Z.~Lin, {\em Spreading-vanishing dichtomy in the diffusive logistic model with a free boundary}, SIAM J. Math. Anal., 42 (2010), 377--405.


\bibitem{F}F. J. Fernandez,
{\em Unique continuation for parabolic operators II},
Comm. Partial Differential Equations, 28 (2003), 1597-1604.

\bibitem{Henry} D. Henry,
{\em Some infinite dimensional Morse Smale systems defined by parabolic differential
equations}, J. Differential Equations, 59 (1985), 165-205.

\bibitem{Lou}
B. Lou, {\em Convergence in time-periodic quasilinear parabolic equations in one space dimension}, J. Differential Equations,
265 (2018), 3952-3969.

\bibitem{Ma1982}H. Matano, {\em Nonincrease of the lap-number of a solution for a one-dimensional semilinear parabolic equation}, J. Fac. Sci. Univ. Tokyo Sect. IA Math., 29 (2) (1982), 401-441.
\end{thebibliography}
\end{document}